\newtheorem{defi}{Definition}[section]
\newtheorem{teo}[defi]{Theorem}
\newtheorem{oss}[defi]{Remark}
\newtheorem{cor}[defi]{Corollary}
\begin{document}


\title{A symmetry result on submanifolds of space forms and applications}

\author{Ali Maalaoui$^{(1)}$ \& Vittorio Martino$^{(2)}$}
\addtocounter{footnote}{1}
\footnotetext{Department of Mathematics,
Rutgers University - Hill Center for the Mathematical Sciences
110 Frelinghuysen Rd., Piscataway 08854-8019 NJ, USA. E-mail address:
{\tt{maalaoui@math.rutgers.edu}}}
\addtocounter{footnote}{1}
\footnotetext{Dipartimento di Matematica, Universit\`a di Bologna,
piazza di Porta S.Donato 5, 40127 Bologna, Italy. E-mail address:
{\tt{martino@dm.unibo.it}}}
\date{}
\maketitle

{\noindent\bf Abstract} In this paper we prove a symmetry result on submanifolds of codimension one in a $n+1$-dimensional space form, related to the geodesic distance function and to the normal curvature of some fixed vector field. As applications we will prove sphere characterization type theorems for K\"{a}hler manifolds endowed with a toric group action.

\section{Introduction}
In this paper we will prove a symmetry result on submanifolds of codimension one (hypersurface type) in a $n+1$-dimensional space form, related to the geodesic distance function and to the normal curvature of some vector field. We recall that a space form is by definition a complete Riemannian manifold with constant sectional curvature $K$. Let then $V:=V^{n+1}$, $n\geq 1$ be a smooth complete manifold of dimension $n+1$ and let $g$ be a Riemannian metric on $V$ and $\nabla$ be the related Levi-Civita connection of $g$. Let us consider $M:=M^n$ a smooth, compact, orientable and with no boundary, embedded manifold on $V$, of dimension $n$. We consider on $M$ the metric induced by $g$ and the related induced Levi-Civita connection. Let us call $N$ the inner unit normal to $M$. The Second Fundamental Form $h$ on $M$ is the symmetric tensor defined on $TM$, the tangent bundle of $M$ , in the following way:
$$
h(X,Y)=g(\nabla_X  Y,N), \qquad \forall\; X,Y \in TM
$$
\begin{defi}\label{normalcurvaturvectorfield}
Let us consider a unit smooth vector field $X\in TM$. We will call the Normal Curvature of $M$ referred to the vector field $X$ the coefficient of the Second Fundamental Form related to the  vector field $X$:
$$\mathcal{C}^X:=h(X,X)=g(\nabla_X  X,N)$$
\end{defi}

\noindent
The distance function $d$ on $V$, related to $g$,  is defined as follows: let us consider any two points $p_0,p_1 \in V$, then
$$d(p_0,p_1)=\int_{t_0}^{t_1}\sqrt{g\big(\dot{r}(t),\dot{r}(t)\big)}\;dt$$
where
$$r:[t_0,t_1]\rightarrow V, \qquad r(t_0)=p_0, \quad r(t_1)=p_1$$
is a minimal geodesic in $V$ connecting $p_0$ and $p_1$. For $q\in V$, if $R>0$ is such that $exp_q$ (the exponential map at $q$) is a diffeomorphism on the ball $B(0,R)\subset T_q V$ then the geodesic ball $\mathcal{B}(q,R)$ of center $q$ and radius $R$ is the image set in $V$
$$\mathcal{B}(q,R)=exp_q(B(0,R))$$
moreover if the closed ball $\overline{B(0,R)}$ is also contained in an open set $U\subset T_q V$ where $exp_q$ is a diffeomorphism, then the geodesic sphere is $\partial\mathcal{B}(q,R)=exp_q(\partial B(0,R))$.\\
We can now state our result:
\begin{teo}\label{main}
Let $V$ be a $n+1$-dimensional Riemannian manifold with constant sectional curvature $K$ and let $M$ be a smooth, orientable, compact, connected, with no boundary, embedded manifold on $V$, of dimension $n$. Suppose there exist a point $q \notin M$ and a non-singular vector field $X \in TM$ such that
\begin{equation}\label{constantdistance}
X(d(q,p))=0, \qquad \forall\; p \in M
\end{equation}
Let $\mathcal{C}^X $ be the Normal Curvature of $M$ referred to the vector field $X$. We have two cases:
$$
\begin{array}{ll}
    i)  & \hbox{ $K\leq 0$. If $\mathcal{C}^X $ is constant on $M$, then $M$ is a geodesic sphere in $V$;} \\
    ii) & \hbox{ $K > 0$. If $M$ belong to $\mathcal{B}(q,\frac{\pi}{\sqrt{K}})$ and $\mathcal{C}^X $ is
           constant on $M$, then $M$}\\
     &    \hbox{  is a geodesic sphere in $V$.}
\end{array}
$$

\end{teo}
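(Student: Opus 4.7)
The plan is to show that the restriction $\tilde{\rho}:=d(q,\cdot)|_{M}$ of the distance function to $M$ is constant; once this is achieved, the compactness, connectedness and dimensionality of $M$ force it to coincide with $\partial\mathcal{B}(q,R)$ for the corresponding radius $R$. The main input is the well-known explicit Hessian of the distance function $\rho:=d(q,\cdot)$ in a space form,
\[
\mathrm{Hess}^{V}(\rho)(Y,Y)=f(\rho)\bigl(|Y|^2-g(Y,\nabla\rho)^2\bigr),
\]
with $f(\rho)=1/\rho$ if $K=0$, $f(\rho)=\sqrt{K}\cot(\sqrt{K}\rho)$ if $K>0$, and $f(\rho)=\sqrt{-K}\coth(\sqrt{-K}\rho)$ if $K<0$. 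Along $M$ I would decompose $\nabla^{V}\rho=T+\lambda N$, so that $T=\nabla^{M}\tilde{\rho}\in TM$, $\lambda=g(\nabla\rho,N)$, and $|T|^2+\lambda^2=1$.

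The core computation is to express $\mathrm{Hess}^{M}(\tilde{\rho})(X,X)$ in two different ways. On one hand, the Gauss formula yields $\mathrm{Hess}^{M}(\tilde{\rho})(Y,Z)=\mathrm{Hess}^{V}(\rho)(Y,Z)+\lambda\,h(Y,Z)$ for any tangent $Y,Z$; choosing $Y=Z=X$ and using $|X|=1$ together with the hypothesis $X(\rho)=0$ gives $\mathrm{Hess}^{M}(\tilde{\rho})(X,X)=f(\rho)+\lambda\,\mathcal{C}^{X}$. On the other hand, differentiating the constraint $X(\tilde{\rho})\equiv 0$ in the direction $X$ and using $\mathrm{Hess}^{M}(\tilde{\rho})(X,X)=X(X(\tilde{\rho}))-(\nabla^{M}_{X}X)(\tilde{\rho})$ produces $\mathrm{Hess}^{M}(\tilde{\rho})(X,X)=-g(\nabla^{M}_{X}X,T)$. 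Equating the two expressions yields the pointwise identity
\[
g(\nabla^{M}_{X}X,T)+f(\rho)+\lambda\,\mathcal{C}^{X}=0 \qquad \text{on } M.
\]

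To close the argument, I would evaluate this identity at the critical points of $\tilde{\rho}$, which exist by compactness. At such a point $T=0$, so $\lambda=\pm 1$, and the identity reduces to $\mathcal{C}^{X}=-\lambda\,f(\rho)$. In case (i), $f$ has constant sign on its entire domain, so the constancy of $\mathcal{C}^{X}$ combined with the strict monotonicity of $f$ forces $\lambda$ to have a common sign and $\rho$ a common value at all critical points; in particular the minimum and maximum of $\tilde{\rho}$ coincide, so $\tilde{\rho}$ is constant and $M$ is the claimed geodesic sphere. Case (ii) is the main obstacle of the proof, because $f$ changes sign on $(0,\pi/\sqrt{K})$ and the extremal argument alone no longer precludes critical points with opposite signs of $\lambda$; the hypothesis $M\subset\mathcal{B}(q,\pi/\sqrt{K})$ is precisely what is needed to keep $\rho$ in the range on which $f$ is strictly monotonic, and the expected route is to supplement the pointwise identity with a global integral identity, obtained by integrating over $M$ and applying the divergence theorem together with $X(\tilde{\rho})=0$, to eliminate the remaining degenerate configurations and conclude that $\tilde{\rho}$ is constant in this case as well.
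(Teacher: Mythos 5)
Your case (i) is correct, and it is essentially the paper's own argument in different packaging. The paper takes the second variation of $E=\frac{1}{2}d(q,\cdot)^2$ along $X$ and solves the Jacobi equation explicitly in the space form, arriving at the critical-point identity $0=-d(q,p_0)\,\mathcal{C}^X+\dot u(1)$, where $\dot u(1)/d(q,p_0)$ is exactly your $f(\rho)$; your route through the known Hessian of the distance function in a space form plus the Gauss formula is the same computation in static form (that Hessian formula is itself proved by the Jacobi-field argument), and your conclusion --- injectivity of $f$ pins down a single critical value of $\rho$, so $\max\tilde\rho=\min\tilde\rho$ and $M$ is a geodesic sphere --- matches the paper's. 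One point where you are in fact more careful than the paper: you keep both signs $\lambda=\pm1$ at a critical point in play, whereas the paper simply asserts $\dot r(0,1)=-d(q,p_0)N$, i.e.\ $\lambda=-1$, as a ``consequence'' of criticality (which only gives orthogonality, not the sign); for $K\le 0$ your remark that $f>0$ forces a single admissible sign repairs this at no cost.

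Case (ii), however, is a genuine gap in your proposal, and your proposed repair does not close it. As you note, $f(\rho)=\sqrt K\cot(\sqrt K\rho)$ is a bijection of $(0,\pi/\sqrt K)$ onto $\mathbb{R}$, so both branches $\mathcal{C}^X=-\lambda f(\rho)$, $\lambda=\pm1$, have solutions, namely $\rho_1$ and $\rho_2=\pi/\sqrt K-\rho_1$, and the extremal argument cannot exclude that the maximum of $\tilde\rho$ sits on one branch and the minimum on the other. The suggested fix --- integrate the pointwise identity over $M$ and apply the divergence theorem --- is not carried out, and I do not see how it could work: integrating $g(\nabla^M_X X,T)=-\mathrm{Hess}^M(\tilde\rho)(X,X)$ by parts on the closed manifold $M$ (using $g(X,T)=X(\tilde\rho)=0$) merely moves the derivative back and reproduces the same identity, so no new global information appears. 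What is actually needed is a sign statement: that $\lambda$ takes the same value (namely $-1$, with $N$ the inner normal) at every critical point, which kills the branch $\rho_2$. This is precisely how the paper closes case (ii): it asserts $\dot r(0,1)=-d(q,p_0)N$ at every critical point, after which only $\mathcal{C}^X=f(\rho)$ survives and injectivity of $\cot$ on $(0,\pi)$ --- this is where $M\subset\mathcal{B}(q,\pi/\sqrt K)$ enters --- forces a single distance value. Be aware that the paper states this sign fact without proof (justifying $\lambda=-1$ at all critical points requires, e.g., knowing that $q$ lies in the compact region bounded by $M$), so your difficulty is not an artifact of your method; you have correctly isolated the one delicate step, but you have not resolved it, and without it statement (ii) remains unproved in your write-up.
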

\begin{oss}\label{remarkmain}
We recall that fixed a point $q \in V$ then a conjugate point to $q$ is a point $p \in V$ such that there is no uniqueness for minimal geodesic connecting $q$ and $p$; in particular $exp_q$ fails to be a local diffeomorphism near $p$.  Then when $K>0$ we need to assume that $M$ is contained in the geodesic ball $\mathcal{B}(q,\frac{\pi}{\sqrt{K}})$ in order to avoid conjugate points. In fact by a classical result we have that if $V$ has positive constant sectional curvature $K$, then the first conjugate point along any geodesic starting from $q$ occurs at distance at least $\frac{\pi}{\sqrt{K}}$. On the other hand when $K\leq0$ then there are no conjugate points to any point of $V$.
\end{oss}

\noindent
We will give three applications of this result: the first one on almost symplectic manifolds, with the Normal Curvature referred to the hamiltonian vector field of the hypersurface $M$; the second one on K\"{a}hler manifolds endowed with a toric group action; finally we will specialize at the case of Reinhardt domains in $\mathbb{C}^{n+1}$ where we obtain as corollary the result in \cite{io2}: in that paper the second author, motivated by two recent works by Hounie and Lanconelli (\cite{HL}, \cite{HL1}), proved an Alexandrov type theorems for Reinhardt domains in $\mathbb{C}^{n+1}$ using the Characteristic Curvature rather than the Levi ones. Next we prove our result Theorem (\ref{main})
\begin{proof}
First of all, since the vector field $X$ is non-singular we can always normalize it such that $g(X,X)=1$.
For every $p \in M$ let us consider an integral curve $\gamma$ of any vector field $Y \in TM$ passing through $p$, namely: let $\varepsilon>0$, then $\gamma:(-\varepsilon,\varepsilon)\rightarrow M$ such that
\begin{equation}\label{integralcurve}
\gamma(0)=p,\quad \mbox{and} \quad \frac{d}{ds}\gamma(s)=Y_{\gamma(s)}, \quad \forall s \in (-\varepsilon,\varepsilon)
\end{equation}
Since there are no conjugate points to $q$ on $M$, then the exponential map $exp_q$ has no singularities on $M$. We consider the smooth family $r(s,t)$ of unique minimal geodesics connecting $q$ and $\gamma$, that is:
$$r(s,t):(-\varepsilon,\varepsilon)\times[0,1]\longrightarrow V$$
\begin{equation}\label{familygeodesics}
\left\{
\begin{array}{l}
\displaystyle \frac{D}{\partial t}\frac{\partial}{\partial t}r(s,t)=\nabla_{\dot{r}} \dot{r}=0\\
\\
r(s,0)=q,\qquad r(s,1)=\gamma(s)\\
\end{array}\right.
\end{equation}

\begin{figure}[h!]
   \centering
  {\includegraphics[width=5.0in,height=3.0in]{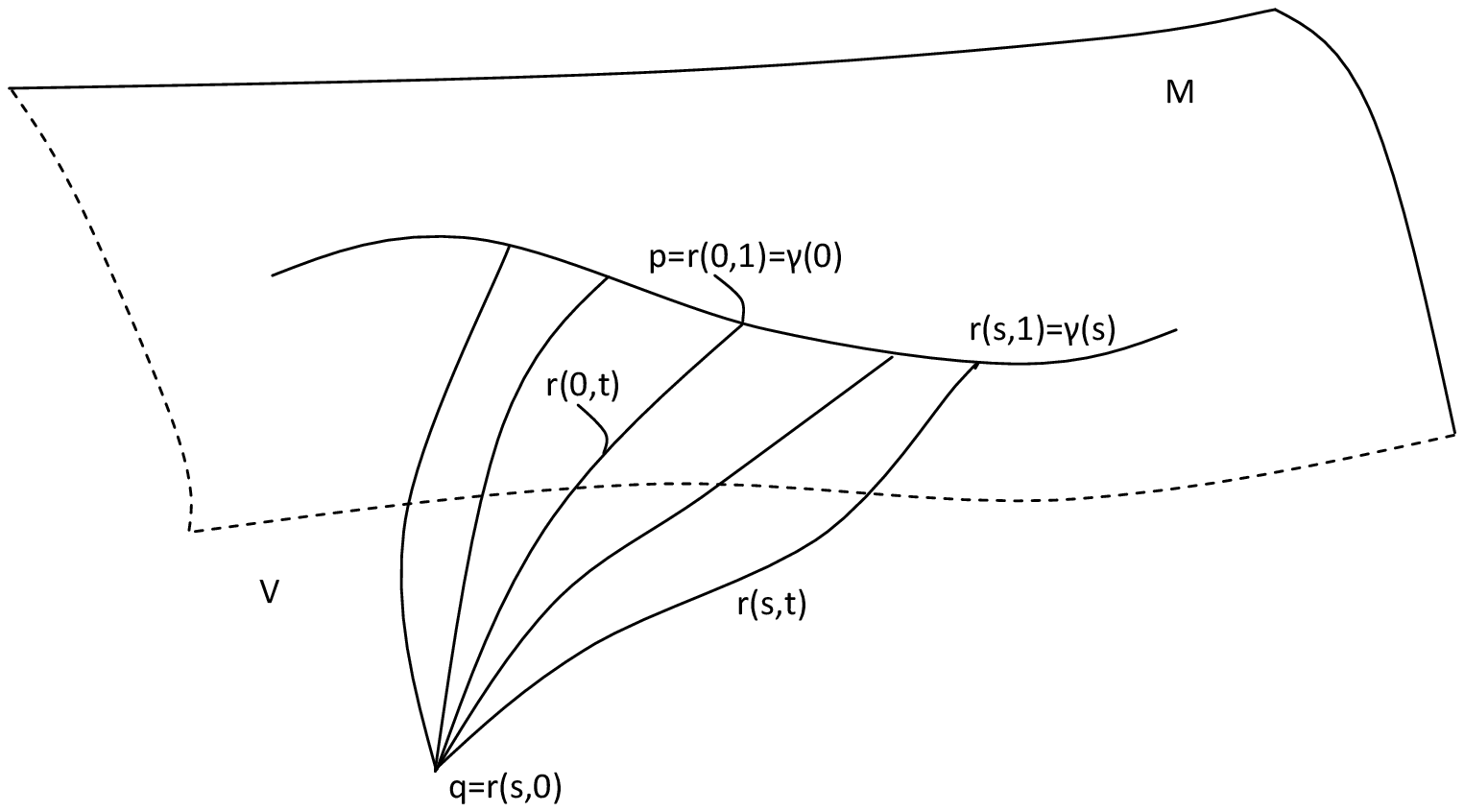}}
\end{figure}

\noindent
We will always denote by a dot $\cdot$ the derivative with respect to $t$. Moreover, by setting
$$\ell_{\gamma(s)}:=d(q,\gamma(s))$$
then we have parametrized the geodesics such that they have constant speed (with respect to t)
$$|\dot{r}(s,t)|=\ell_{\gamma(s)}, \qquad \forall\; t \in [0,1]$$
Let us define the following function defined on $M$
$$E:M\rightarrow \mathbb{R},\qquad E(p):=\frac{\Big( d(q,p)\Big)^2}{2}$$
Thus we have
$$Y(E(p))=d(q,p)Y\big( d(q,p) \big)$$
for every $Y\in  TM$, and in particular, as $q\notin M$ we have $d(q,p)>0$, then the critical points on $M$ of
$E$ are the same of $d(q,\cdot)$.
We are going to take the first variation of $E$ along any $Y\in TM$. In order to do that we consider the functional
$$\varphi:=E\circ\gamma:(-\varepsilon,\varepsilon)\rightarrow \mathbb{R}$$
$$\varphi(s)=E(\gamma(s))=\frac{\Big( d(q,\gamma(s))\Big)^2}{2}=\frac{1}{2}\int_{0}^{1} g\big(\dot{r}(s,t),\dot{r}(s,t)\big)\;dt$$
Therefore
$$Y(\varphi(s))=\frac{d}{ds}\varphi(s)=$$
$$=\frac{1}{2}\int_{0}^{1} \frac{d}{ds} g\big(\dot{r}(s,t),\dot{r}(s,t)\big)\;dt=
\int_{0}^{1} g\big(\frac{D}{ds}\dot{r}(s,t),\dot{r}(s,t)\big)\;dt$$
Now
$$g\big(\frac{D}{ds}\dot{r}(s,t),\dot{r}(s,t)\big)=
g\big(\frac{D}{ds}\frac{d}{dt}r(s,t),\dot{r}(s,t)\big)=
g\big(\frac{D}{dt}\frac{d}{ds}r(s,t),\dot{r}(s,t)\big)=$$
$$=\frac{d}{dt}g\big(\frac{d}{ds}r(s,t),\dot{r}(s,t)\big)-
g\big(\frac{d}{ds}r(s,t),\frac{D}{dt}\dot{r}(s,t)\big)=$$
and
$$\frac{D}{dt}\dot{r}(s,t)=\nabla_{\dot{r}}\dot{r} =0$$
therefore by (\ref{integralcurve}) and (\ref{familygeodesics}) we have
$$Y(\varphi(s))=\int_{0}^{1} \frac{d}{dt}g\big(\frac{d}{ds}r(s,t),\dot{r}(s,t)\big)\;dt=$$
$$=\Big[ g\big(\frac{d}{ds}r(s,t),\dot{r}(s,t)\big) \Big]_0^1=g\big(Y_{\gamma(s)},\dot{r}(s,1)\big)$$
Let us suppose now that $p_0\in M$ is a critical point for $E$, then we have $Y(\varphi(0))=0$ for any vector field $Y \in T_{p_0}M$ and consequently
$$\dot{r}(0,1)=-\ell_{\gamma(0)}N=-d(q,p_0)N$$
Moreover by the assumption (\ref{constantdistance}) we have for every $p \in M$ that
$$X(\varphi(s))=g\big(X_{\gamma(s)},\dot{r}(s,1)\big)=0, \qquad \forall\; s \in (-\varepsilon,\varepsilon)$$
Now we are going to take the second variation of $E$, twice along $X$, that is: let
$$
\gamma(0)=p,\quad \mbox{and} \quad \frac{d}{ds}\gamma(s)=X_{\gamma(s)}, \quad \forall s \in (-\varepsilon,\varepsilon)
$$
then we obtain
\begin{eqnarray}\label{secondvariation}
\nonumber   && 0=X^2(\varphi(s))=\frac{d}{ds}g\big(X_{\gamma(s)},\dot{r}(s,1)\big)=\\
\nonumber   && =g\big(\frac{D}{ds}X_{\gamma(s)},\dot{r}(s,1)\big)+g\big(X_{\gamma(s)},\frac{D}{ds}\dot{r}(s,1)\big)= \\
\nonumber   && =g\big(\nabla_{X_{\gamma(s)}} X_{\gamma(s)},\dot{r}(s,1)\big)+g\big(X_{\gamma(s)},\frac{D}{ds}\dot{r}(s,1)\big)
\end{eqnarray}
Now if we evaluate the previous expression at a critical value $p_0=\gamma(0)$ of $E$, we get:
\begin{eqnarray}\label{secondvariationcriticalpoint}
\nonumber    && 0=g\big(\nabla_{X} X,\dot{r}(0,1)\big)_{p_0}+g\big(X_{p_0},\frac{D}{ds}\dot{r}(0,1)\big)=\\
\nonumber    && =-d(q,p_0)g\big(\nabla_{X} X,N\big)_{p_0}+g\big(X_{p_0},\frac{D}{ds}\dot{r}(0,1)\big)=\\
             && =-d(q,p_0)\mathcal{C}^X_{p_0}+g\big(X_{p_0},\frac{D}{ds}\dot{r}(0,1)\big)
\end{eqnarray}
We need to compute
\begin{equation}\label{derivatajacobi}
\frac{D}{ds}\dot{r}(0,1)=\frac{D}{ds}\dot{r}(s,t)\Bigg|_{s=0,t=1}=
\frac{D}{ds}\frac{d}{dt}r(s,t)\Bigg|_{s=0,t=1}=\frac{D}{dt}\frac{d}{ds}r(s,t)\Bigg|_{s=0,t=1}
\end{equation}
Now we explicitly note that
$$J(t):=\frac{d}{ds}r(s,t)\Bigg|_{s=0}$$
is a Jacobi field along the geodesic $r(0,\cdot)$, since it is a variation field through geodesics of the geodesic $r(0,\cdot)$. In particular $J$ is normal along $r(0,\cdot)$, in fact:
$$\frac{d}{dt} g\big(J(t),\dot{r}(0,t)\big)=\frac{d}{dt} g\big(\frac{d}{ds}r(s,t),\frac{d}{dt}r(s,t) \big)\Bigg|_{s=0}=$$
$$=g\big(\frac{D}{dt} \frac{d}{ds}r(s,t),\frac{d}{dt}r(s,t) \big)\Bigg|_{s=0}+
g\big(\frac{d}{ds}r(s,t),\frac{D}{dt}\frac{d}{dt}r(s,t) \big)\Bigg|_{s=0}=$$
$$=g\big(\frac{D}{ds}\frac{d}{dt}r(s,t),\frac{d}{dt}r(s,t) \big)\Bigg|_{s=0}=
\frac{1}{2}\frac{d}{ds}g\big(\frac{d}{dt}r(s,t),\frac{d}{dt}r(s,t) \big)\Bigg|_{s=0}=$$
$$=\frac{1}{2}\frac{d}{ds}|\dot{r}(s,t)|^2\Bigg|_{s=0}=\frac{1}{2}\frac{d}{ds}\big(\ell_{\gamma(s)}\big)^2\Bigg|_{s=0}=0$$
Therefore the function $g\big(J(t),\dot{r}(0,t)\big)$ is constant along $t$, and for $t=1$ we have:
$$g\big(J(1),\dot{r}(0,1)\big)=-d(q,p_0)g\big(X_{p_0},N_{p_0}\big)=0$$
Now we will use the hypothesis on the constant sectional curvature in order to write explicit formulas for the normal Jacobi fields. First we recall that if $K$ is the constant sectional curvature of $V$, then the Riemann curvature endomorphism $R$ satisfies the following identity:
\begin{equation}\label{riemannendomorphism}
R(X,Y)Z=K\Big(g\big(Y,Z\big)X-g\big(X,Z \big)Y \Big), \qquad \forall X,Y,Z \in TM
\end{equation}
Thus, as $J$ satisfies the Jacobi equation, we have
$$0=\frac{D^2}{dt^2}J(t)+R(J,\dot{r}(0,t))\dot{r}(0,t)=$$
$$=\frac{D^2}{dt^2}J(t)+K\Big(g\big(\dot{r}(0,t),\dot{r}(0,t)\big)J(t)-g\big(J(t),\dot{r}(0,t) \big)\dot{r}(0,t) \Big)=$$
$$=\frac{D^2}{dt^2}J(t)+|\dot{r}(0,t)|^2 KJ(t)=\frac{D^2}{dt^2}J(t)+\ell^2_{p_0} KJ(t)$$
where $\ell_{p_0}=d(q,p_0)$. Now by choosing any parallel vector field $Z$ along the geodesic $r(0,\cdot)$, namely
$$\frac{D}{dt}Z(t)=\nabla_{\dot{r}(0,t)}Z_{r(0,t)}=0$$
we can write $J(t)=u(t)Z(t)$, provided the scalar function $u$ satisfies the second order differential equation:
$$\ddot{u}(t)+\ell^2_{p_0} Ku(t)=0$$
In particular since we have $J(0)=0$ we require $u(0)=0$ and then we can choose $Z$ such that $Z(1)=X_{p_0}$ so that $u(1)=1$.
Finally, by setting $a=d(q,p_0)\sqrt{K}$ (for $K\neq 0$), we have the explicit formulas:
\begin{equation}\label{normaljacobifields}
u(t)=\left\{
\begin{array}{lc}
  \displaystyle\frac{1}{\sin(a)}\sin(at),   & K>0 \\
  \\
  t,                                        & K=0 \\
  \\
  \displaystyle\frac{1}{\sinh(a)}\sinh(at),  & K<0
\end{array}
\right.
\end{equation}
We want to note that if $K>0$ then by our hypothesis for every $p\in M$ it holds
$$0<d(q,p)<\frac{\pi}{\sqrt{K}} \Longrightarrow 0<a<\pi$$
Now by (\ref{derivatajacobi}) we have
$$\frac{D}{ds}\dot{r}(0,1)=\frac{D}{dt}\frac{d}{ds}r(s,t)\Bigg|_{s=0,t=1}=
\frac{D}{dt}J(t)\Bigg|_{t=1}=\dot{u}(1)Z(1)=\dot{u}(1)X_{p_0}$$
with
\begin{equation}\label{derivatau1}
\dot{u}(1)=\left\{
\begin{array}{lc}
  \displaystyle a \cot(a) ,   & K>0 \\
  1,                          & K=0 \\
  \displaystyle a \coth(a),   & K<0
\end{array}
\right.
\end{equation}
Therefore (\ref{secondvariationcriticalpoint}) rewrites as
$$0=-d(q,p_0)\mathcal{C}^X_{p_0}+g\big(X_{p_0},\dot{u}(1)X_{p_0}\big)=
-d(q,p_0)\mathcal{C}^X_{p_0}+\dot{u}(1)$$
Since $X$ is everywhere defined and non-zero, and $M$ is compact, then $E$ admits maximum and minimum which are critical point for $E$. By the hypothesis on the Normal Curvature, $\mathcal{C}^X_{p}=\mathcal{C}^X =const.$ on $M$, we have that for all the critical points $p_0$ of $E$, in particular on the maximum and on the minimum, it holds
\begin{equation}\label{raggiodistanza}
\left\{
\begin{array}{lc}
  \displaystyle d(q,p_0)=\frac{1}{\sqrt{K}} \arctan\Big(\frac{\sqrt{K}}{\mathcal{C}^X}\Big) ,          & K>0 \\
  \\
  d(q,p_0)=\displaystyle\frac{1}{\mathcal{C}^X},                                                       & K=0 \\
  \\
  \displaystyle d(q,p_0)=\frac{1}{\sqrt{K}} \textrm{arctanh}\Big(\frac{\sqrt{K}}{\mathcal{C}^X}\Big) , & K<0
\end{array}
\right.
\end{equation}
$$$$
Therefore the distance function $d(q,\cdot)$ is constant on $M$ and $M$ is a geodesic sphere.
\end{proof}

\begin{oss}
From the expression of $X^2(E(\cdot))$ evaluated at a maximum point $p_0$ of the distance function we have in particular
$$X^2(E(p_0))=-d(q,p_0)\mathcal{C}^X_{p_0}+\dot{u}(1) \leq 0$$
and since $d(q,p_0)$ is strictly positive we have that the constant prescribed for $\mathcal{C}^X$ in the hypothesis  necessarily satisfies
$$\mathcal{C}^X\geq \frac{\dot{u}(1)}{d(q,p_0)}$$
\end{oss}

\section{Some applications}
We are going to apply the Theorem (\ref{main}) to some manifolds with additional structures. First we treat the case of an almost symplectic manifold with a general symplectic group action, then we will specialize to the case of K\"{a}hler manifold with a toric group action; finally we consider the particular case of the Reinhardt domains in $\mathbb{C}^{n+1}$.\\
Let $V:=V^{2(n+1)}$ be a smooth differentiable manifold of dimension $2(n+1)$. We recall that an \emph{almost symplectic structure} on $V$ is a $2$-form $\omega$ everywhere non degenerate on $V$; $\omega$ is said symplectic if it is close. Moreover an \emph{almost complex structure} on $V$ is a map $J$ such that for every $p\in M$ $J_p$ is a smooth endomorphism on $T_p M$ with  $J_p^2=-1$; $J$ is said complex if it is integrable. For every fixed almost symplectic structure $\omega$ there exists and almost complex structure $J$ on $V$ and a compatible metric $g$ on $V$, that means that it holds
\begin{equation}\label{compatible}
\omega(X,Y)=g(X,JY)
\end{equation}
for every pair of vector fields $X,Y \in TV$.
Let us consider then any compatible triple $(\omega,J,g)$ on $V$ and a smooth embedded manifold $M$ on $V$, of codimension $1$. $M$ can always be seen (at least locally) as the (smooth) level set of some function $H$, namely,
$$H:V\rightarrow \mathbb{R}, \qquad M:=\{ H=0 \}, \qquad \nabla H\neq 0 \; on \; M$$
where $\nabla$ denotes the \emph{gradient} with respect to $g$.
Then it is always possible to define the hamiltonian (non-singular) vector field $X^H \in TM$ related to the hamiltonian function $H$ in the following way:
$$\omega(X^H,\cdot)=-dH(\cdot)$$
or equivalently, since $dH(\cdot)=g(\nabla H, \cdot)$
$$X^H=J \nabla H$$
We need to recall some known facts in order to state our next results. First we recall that a symplectic transformation on $V$ is a map $u:V\rightarrow V$ that satisfies $u^* \omega=\omega$.
A symplectic Lie group action $A$ on $V$ is a group action such that $a$ is a symplectic map on $V$, for every $a\in A$.
We have that if the symplectic Lie group action $A$ on $V$ is compact, then there always exists an almost complex structure $J$ on $V$ such that with respect the associated compatible metric $g$ is invariant under the action of $A$ (see for instance \cite{audin}): we will call this invariant metric $g_A$.
We can state now the following corollary of the Theorem (\ref{main}):
\begin{cor}
Let $(V,\omega)$ an almost symplectic manifold of dimension $2(n+1)$ and let $M$ be a smooth, orientable, compact, connected, with no boundary, embedded manifold on $V$, of codimension $1$. Let $A$ a symplectic compact Lie group action on $V$ with a fixed point $q$, such that $M$ is stable under the action of $A$. Suppose $V$ has non-positive constant sectional curvature with respect to the invariant metric $g_A$.\\
If the Normal Curvature related to the hamiltonian vector field $\mathcal{C}^{X^H}$ is constant on $M$, then $M$ is a geodesic sphere with respect to the metric $g_A$.
\end{cor}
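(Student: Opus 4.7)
The plan is to apply Theorem \ref{main} to $M$ viewed inside the space form $(V,g_A)$, with center $q$ (the fixed point of the $A$-action) and unit tangent field $X := X^H/\lvert X^H\rvert_{g_A}$. Since $K\leq 0$, case $(i)$ of the theorem is what we need; the structural hypotheses on $M$ are given, and because $h$ is a tensor the constancy of $\mathcal{C}^{X^H}$ on $M$ passes to constancy of $\mathcal{C}^X$ after unit-normalization.

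Two preliminary normalizations set the stage. First, starting from any defining function $H_0$ of $M$, I replace it by its $A$-average $H := \int_{A} H_0\circ a\, da$; compactness of $A$ and the $A$-stability of $M$ guarantee that $H$ is still a regular defining function of $M$, and now $A$-invariant. Second, because both $g_A$ and the compatible almost complex structure $J$ are $A$-invariant by construction, the hamiltonian vector field $X^H = J\nabla_{g_A} H$ is $A$-invariant as well, and is non-singular on $M$ because $\nabla H$ is.

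The heart of the proof is the constant-distance identity $X^H\bigl(d(q,\cdot)\bigr)\equiv 0$ on $M$. Setting $f := \tfrac12 d(q,\cdot)^2$ and using $\omega(Y,Z)=g_A(Y,JZ)$, one computes
\begin{equation*}
X^H(f) \;=\; g_A\bigl(\nabla_{g_A} f,\, J\nabla_{g_A} H\bigr) \;=\; \omega\bigl(\nabla_{g_A} f,\,\nabla_{g_A} H\bigr) \;=\; \{f,H\}_{\omega},
\end{equation*}
so the task reduces to showing that this Poisson bracket vanishes on $M$. Since $q$ is fixed and $A$ acts by $g_A$-isometries, $f$ is $A$-invariant, as is $H$ by construction. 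I would then invoke a Noether-type observation: because $K\leq 0$ rules out conjugate points, $\exp_q\colon T_q V\to V$ is a global diffeomorphism intertwining the linear isotropy action of $A$ on $T_q V$ with the action on $V$; in this linear model the hamiltonian vector field of $f$ corresponds to (a multiple of) the generator of the diagonal circle of the compatible unitary group, which sits inside $A$. Hence $X^f$ is a fundamental vector field of the $A$-action, and the $A$-invariance of $H$ forces $\{f,H\}=X^f(H)=0$, giving $X^H\bigl(d(q,\cdot)\bigr)=0$ on $M$.

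With every hypothesis of Theorem \ref{main} in place, case $(i)$ of that theorem closes the proof: $M$ is a $g_A$-geodesic sphere. The main obstacle I anticipate is precisely the Noether-type vanishing of $\{f,H\}$: it is what converts the kinematic statement that $A$ preserves both $M$ and the geodesic spheres about $q$ into the pointwise identity $X^H\bigl(d(q,\cdot)\bigr)=0$, and this is where the symplectic character of the $A$-action, together with the fixed point $q$ and the absence of conjugate points, are genuinely used.
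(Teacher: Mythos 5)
Your setup is fine up to and including the reduction: normalizing $X^H$, averaging the defining function over $A$, and observing that everything comes down to the identity $X^H\bigl(d(q,\cdot)\bigr)=0$, i.e.\ to the vanishing of $\{f,H\}$ on $M$. The genuine gap is the Noether-type step that is supposed to produce this vanishing. You claim that, in the linear model given by $\exp_q$, the hamiltonian field $X^f$ of $f=\frac12 d(q,\cdot)^2$ generates ``the diagonal circle of the compatible unitary group, which sits inside $A$''. Nothing in the hypotheses forces that circle to lie in $A$: the corollary allows $A$ to be an arbitrary compact Lie group acting symplectically with fixed point $q$, so $X^f$ need not be a fundamental vector field of the action. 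The principle you are implicitly invoking --- that the Poisson bracket of two $A$-invariant functions vanishes --- is false in general: invariance of $f$ and $H$ only makes $\{f,H\}$ an $A$-invariant function. Concretely, take $V=\mathbb{C}^2$ with the flat K\"ahler structure, $A=S^1$ acting by $(z_1,z_2)\mapsto(e^{i\theta}z_1,e^{-i\theta}z_2)$ (unitary, hence symplectic and isometric, with unique fixed point $q=0$), and $H=|z_1|^2+|z_2|^2+\varepsilon\,\mathrm{Re}(z_1z_2)-1$ with $0<\varepsilon<2$. Then $M=\{H=0\}$ is a smooth compact $A$-stable hypersurface, $f$ and $H$ are both $A$-invariant, yet $X^f(H)$ is a nonzero multiple of $\varepsilon\,\mathrm{Im}(z_1z_2)$, which is not identically zero on $M$ (e.g.\ at $z_1=r$, $z_2=is$, $r^2+s^2=1$). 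So your derivation of $X^H\bigl(d(q,\cdot)\bigr)=0$ cannot be valid as stated. For $K<0$ the linear-model step is doubly problematic: a space form of negative curvature and real dimension at least $4$ carries no K\"ahler structure, so $\exp_q$ does not transport $(\omega,J)$ to the standard linear ones and ``the compatible unitary group'' at $q$ has no invariant meaning along $M$; moreover $\exp_q$ is a global diffeomorphism only when $V$ is simply connected.

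The paper distributes the burden differently, and this is not a cosmetic difference. It asserts that $X^H$ \emph{itself} is one of the vector fields generated by $A$ (tangent to the $A$-orbits); since $q$ is fixed and $A$ acts by $g_A$-isometries, the function $d_A(q,\cdot)$ is $A$-invariant, and the identity $X^H\bigl(d_A(q,\cdot)\bigr)=0$ follows at once --- the invariance statement is about the flow of $X^H$, not about $X^f$. The same property also settles a hypothesis of Theorem \ref{main} that your proposal never checks: $q\notin M$. This must be verified before $d(q,\cdot)$ is even smooth near $M$ and before the theorem applies; the paper gets it for free because every vector field generated by $A$ vanishes at the fixed point $q$, while $X^H$ vanishes nowhere on $M$. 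If you want to repair your argument, the statement to prove is the paper's key claim --- that the characteristic direction of $M$, spanned by $X^H$, is everywhere tangent to the $A$-orbits --- rather than trying to place the flow of the distance function inside $A$.
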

\begin{oss}
For the sake of simplicity we stated the corollary only in the case of non-positive constant sectional curvature: with the additional hypothesis as in Theorem (\ref{main}) (see Remark (\ref{remarkmain})) one can handle the case of positive constant sectional curvature as well.
\end{oss}
\begin{proof}
First of all, since $A$ is a symplectic group action then $X^H$ is one of the vector fields generated by $A$; in particular since $M$ is smooth then $X^H$ never vanishes on $M$. Moreover the fixed point $q$ does not belongs to $M$: in fact if $q\in M$ would mean that all the vector fields generated by the action $A$ vanish at $q$ (by the very definition of fixed point, the orbit of $q$ under the action $A$ is the point $q$), and we know that the hamiltonian vector field $X^H$ (generated by $A$) never vanishes on $M$.  Then by considering the distance $d_A$ induced by the invariant metric $g_A$ on $V$, we have by direct computation that
\begin{equation}\label{almostsymplecticdistanceconstant}
X^H(d_A(q,p))=0, \qquad \forall\; p \in M
\end{equation}
where $q\notin M$ is the fixed point of the action $A$. Then Theorem (\ref{main}) holds.
\end{proof}

\noindent
Now we are going to consider the case of K\"{a}hler manifolds. Let $V:=V^{2(n+1)}$ be a smooth differentiable manifold of dimension $2(n+1)$. $V$ is said to be a K\"{a}hler manifold if there exists a symplectic structure $\omega$, a complex structure $J$ and a Riemannian metric $g$ such that they are compatible in the sense of (\ref{compatible}).
We recall that a $2(n+1)$-dimensional symplectic toric manifold is a compact connected
symplectic manifold $(V^{2(n+1)},\omega)$ equipped with an effective hamiltonian action $A$ of an
$n+1$-torus $\mathbb{T}^{(n+1)}$ and with a corresponding moment map. We will refer to $V$ as a K\"{a}hler toric manifold if in addition the toric action $A$ is holomorphic. We have the following result:
\begin{cor}
Let $V$ be a symplectic toric K\"{a}hler manifold of dimension $2(n+1)$ with non-positive constant sectional curvature and let $M$ be a smooth, orientable, compact, connected, with no boundary, real hypersurface on $V$, stable under the toric group action $A$. \\
If the Normal Curvature related to the hamiltonian vector field $\mathcal{C}^{X^H}$ is constant on $M$, then $M$ is a geodesic sphere.
\end{cor}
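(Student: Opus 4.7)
The plan is to reduce the statement directly to the previous corollary by verifying its hypotheses for the toric group action $A = \mathbb{T}^{n+1}$. The torus $\mathbb{T}^{n+1}$ is compact, and by the definition of a symplectic toric manifold the action $A$ is Hamiltonian, hence in particular symplectic. Since $A$ is also holomorphic, it preserves both $\omega$ and $J$, and therefore preserves the K\"{a}hler metric $g=\omega(\cdot, J\cdot)$; we may thus identify the invariant metric $g_A$ with $g$, and the hypothesis of non-positive constant sectional curvature is exactly the hypothesis required in the previous corollary.

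The only non-formal hypothesis to check is the existence of a fixed point $q$ of $A$ lying off $M$. For this I would use the moment map $\mu:V\rightarrow(\mathfrak{t}^{n+1})^{*}$ of the Hamiltonian toric action: for a generic $\xi \in \mathfrak{t}^{n+1}$, namely one whose one-parameter subgroup is dense in $\mathbb{T}^{n+1}$, the component $\mu^{\xi}:=\langle \mu,\xi\rangle$ is a smooth real-valued function on the compact manifold $V$. Its maximum is a critical point of $\mu^{\xi}$ and hence a zero of the fundamental vector field of $\xi$, which by density of the orbit of $\xi$ must in fact be a fixed point of the whole torus. Equivalently, one can appeal to the Atiyah--Guillemin--Sternberg convexity theorem, in which the vertices of the moment polytope are precisely the fixed points of $A$. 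Either way $V^{A}\neq\emptyset$.

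As in the proof of the previous corollary, no such fixed point can lie on $M$: since $M$ is $A$-invariant the hamiltonian vector field $X^{H}$ is tangent to $M$ and is a non-vanishing fundamental vector field of $A$ along $M$, so $A$ acts on $M$ with positive-dimensional orbits. Hence $q\in V\setminus M$, all the hypotheses of the previous corollary are in place, and we conclude that $M$ is a geodesic sphere in $V$ with respect to $g=g_{A}$. The expected main obstacle is nothing more than the fixed-point existence, but this is classical in toric symplectic geometry, so the argument reduces to a translation of hypotheses without any further computation.
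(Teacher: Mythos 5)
Your proposal is correct and follows essentially the same route as the paper: invariance of the K\"{a}hler metric $g$ under the holomorphic symplectic torus action, existence of a fixed point $q$ (the paper likewise invokes Atiyah, Guillemin--Sternberg and Bredon, noting there are at least $n+2$ fixed points), exclusion of $q$ from $M$ via the non-vanishing of the fundamental vector field $X^H$ along $M$, and then an application of the main symmetry theorem. The only cosmetic differences are that you factor the argument through the preceding corollary with $g_A=g$ (the paper applies Theorem \ref{main} directly) and that you sketch a self-contained moment-map argument for the fixed point, which the paper replaces by citations.
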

\begin{proof}
As in the previous proof, since $A$ is a symplectic group action then $X^H$ is one of the vector fields generated by $A$; moreover since $M$ is smooth then $X^H$ never vanishes on $M$. Moreover by the compactness of $V$ and by the results of Atiyah \cite{atiyah}, Guillemin-Sternberg \cite{guilster} and Bredon \cite{bredon} we have that there exist at least $n+2$ fixed points for the toric group action $A$ and by using the same argument as in the previous proof, we know that none of them is on $M$: let us choose one and let us call this fixed point $q\notin M$. In addition compatible metric $g$ on this K\"{a}hler manifolds is invariant under the group action $A$: in fact the group action $A$ is holomorphic and then it commutes with the compatible complex structure $J$. As consequence we have that then the following condition is satisfied:
\begin{equation}\label{kahlerdistanceconstant}
X^H(d_g(q,p))=0, \qquad \forall\; p \in M
\end{equation}
where $d_g$ is the distance function induced by the invariant metric $g$. Then Theorem (\ref{main}) holds.
\end{proof}

\noindent
Now we will consider $M$ as the smooth boundary of a Reinhardt domain in $\mathbb{C}^{n+1}$. A Reinhardt domain $\Omega$ (with center at the origin) is by definition an open subset of $\mathbb{C}^{n+1}$ such that
\begin{equation}\label{reinhardtdefinition}
\mbox{if} \quad (z_1,\ldots,z_{n+1})\in \Omega \quad \mbox{then} \quad (e^{i\theta_1} z_1,\ldots, e^{i \theta_{n+1}} z_{n+1} )\in \Omega
\end{equation}
for all the real numbers $\theta_1,\ldots,\theta_{n+1}$.
These domains naturally arise in the theory of several complex variables as the logarithmically convex Reinhardt domains are the domains of convergence of power series (see for instance \cite{hormandercomplex}, \cite{jarpfl}). The smooth boundary $M:=\partial\Omega$ is then a smooth real hypersurface in $\mathbb{C}^{n+1}$ and thus a CR-manifold of CR-codimension equal to one, with the standard CR structure induced by the holomorphic structure of $\mathbb{C}^{n+1}$. Thus for every $p\in M$ the tangent space $T_p M$ splits in two subspaces: the $2n-$dimensional horizontal subspace $H_p M$, the largest subspace in $T_p M$ invariant under the action of the standard complex structure $J$ of $\mathbb{C}^{n+1}$ and the vertical one-dimensional subspace generated by the characteristic direction $T_p :=J  N_p$, where $N_p$ is the unit normal at $p$. Moreover, if $g$ is the standard metric on $\mathbb{C}^{n+1}$, then it holds
$$T_p M=H_p M\oplus\mathbb{R}T_p$$
and the sum is $g$-orthogonal.\\
Let $\nabla$ be the Levi-Civita connection for $g$ and let us consider the complexified horizontal space
$$H^{\mathbb{C}} M :=\{Z= X-iJ \cdot X: X\in HM \}$$
The Levi Form $l$ is then the sesquilinear and hermitian operator on $H^{\mathbb{C}}M$ defined in the following way:
$\forall Z_1,Z_2\in H^{\mathbb{C}} M$
\begin{equation}\label{leviform}
l(Z_1,Z_2)= \widetilde{g}({\widetilde{\nabla}}_{Z_1} \bar Z_2, N)
\end{equation}
One then compares the Levi Form with the Second Fundamental Form $h$ of $M$ by using the identity \cite{bog}
$$
l(Z,Z)=h(X,X)+h(J(X),J(X)),\quad \forall X\in HM
$$
\begin{defi}\label{linea}
We will call $\mathcal{C}^T=h(T,T)=g({\nabla}_{T} T, N)$ the Characteristic Curvature of $M$.
\end{defi}

\noindent
Thus, a direct calculation leads to the relation between the classical Mean Curvature $H$, the Levi-Mean Curvature $L$ and the Characteristic Curvature $\mathcal{C}^T$ of $M$:
\begin{equation}\label{meanlevimean}
H=\frac{1}{2n+1}(2nL+\mathcal{C}^T)
\end{equation}
Following a couple of papers by Hounie and Lanconelli (\cite{HL}, \cite{HL1}) in which they prove Alexandrov type theorems for Reinhardt domains in $\mathbb{C}^{n+1}$ using the Levi Mean Curvature, the second author in \cite{io2} proved a similar symmetry result for Reinhardt domains in $\mathbb{C}^{n+1}$ starting from the Characteristic Curvature rather than the Levi ones:
\begin{teo}\label{teoreinhardt}
Let $M:=\partial\Omega$ be the smooth boundary of a bounded Reinhardt domain $\Omega$ in $\mathbb{C}^{n+1}$.
If the characteristic curvature $\mathcal{C}^T$ is constant then $M$ is a sphere of radius equal to $1/\mathcal{C}^T$.
\end{teo}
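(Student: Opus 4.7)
The plan is to invoke Theorem \ref{main} in the flat case $K=0$, taking $V=\mathbb{C}^{n+1}$ (a complete space form of sectional curvature $0$), $q$ equal to the origin, and $X$ equal to the characteristic vector field $T=JN$ on $M$. The vector field $T$ is globally defined and smooth on $M$, automatically of unit length since $J$ is an isometry of the standard metric and $|N|=1$; it is also nowhere vanishing, so the non-singularity hypothesis is immediate. Since $\Omega$ is bounded, $0\notin M$, so $d(0,\cdot)$ is smooth and strictly positive on $M$.

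The central point is to verify hypothesis \eqref{constantdistance}, namely $T(d(0,p))=0$ for every $p\in M$. In Euclidean space $d(0,p)=|p|$ has gradient $p/|p|$, so this reduces to $\langle T_p,p\rangle=0$, i.e.\ $\langle JN_p,p\rangle=0$, which by skew-symmetry of $J$ is equivalent to $\langle N_p,Jp\rangle=0$, namely that $Jp$ is tangent to $M$ at $p$. But $Jp=\frac{d}{d\theta}\big|_{\theta=0}e^{i\theta}p$ is the infinitesimal generator at $p$ of the diagonal action $(z_1,\ldots,z_{n+1})\mapsto(e^{i\theta}z_1,\ldots,e^{i\theta}z_{n+1})$, which is a subgroup of the full Reinhardt torus $\mathbb{T}^{n+1}$ preserving $\Omega$. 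By \eqref{reinhardtdefinition} this diagonal $S^1$-flow sends $M$ to itself, so its generator is tangent to $M$; this gives the required orthogonality.

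With the hypotheses of Theorem \ref{main} verified, we conclude that $M$ is a geodesic sphere in $\mathbb{C}^{n+1}$, which in the flat case is a Euclidean sphere centered at $q=0$ (the proof of Theorem \ref{main} shows in fact that $d(q,\cdot)$ is constant on $M$). The radius formula \eqref{raggiodistanza} for $K=0$ then yields $d(0,p_0)=1/\mathcal{C}^X$; since we took $X=T$ and $T$ is already unit, this radius is exactly $1/\mathcal{C}^T$, matching the statement. The only nontrivial step is identifying the characteristic direction with the infinitesimal generator of the diagonal rotation subgroup of $\mathbb{T}^{n+1}$, whose tangency to $M$ is the minimal form of Reinhardt invariance needed to feed into the symmetry theorem.
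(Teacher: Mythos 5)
Your proposal follows essentially the same route as the paper: both reduce the statement to Theorem \ref{main} in the flat case $K=0$, with $q$ the origin and $X=T=JN$, and then read the radius off \eqref{raggiodistanza}. Where you genuinely differ is in how hypothesis \eqref{constantdistance} is checked. The paper identifies $T$ with the (normalized) Hamiltonian vector field of a defining function and leans on the toric-action framework of the preceding corollaries (invariant metric, fixed points of the action, tangency of $T$ to the orbits), asserting the key identity as a ``direct computation''. You instead verify $T(d(0,\cdot))=0$ by hand: $\langle JN_p,p\rangle=-\langle N_p,Jp\rangle$, and $Jp$ is the infinitesimal generator of the diagonal circle action, which preserves $M$ and is therefore tangent to it. This is more self-contained than the paper's argument and isolates exactly how much Reinhardt symmetry is needed (only the diagonal $S^1$, not the full torus); it is a clean improvement in transparency.

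There is, however, one genuine flaw: you justify $0\notin M$ by ``since $\Omega$ is bounded'', and boundedness is irrelevant to this claim. The Hartogs triangle $\{(z_1,z_2)\in\mathbb{C}^2: |z_1|<|z_2|<1\}$ is a bounded Reinhardt domain whose boundary contains the origin; what rules this out in the present setting is the smoothness of $\partial\Omega$ combined with the Reinhardt symmetry, and that requires an argument. One is available using the same diagonal circle you already exploit: the rotation $\rho_\theta(z)=e^{i\theta}z$ preserves $M$ and fixes $0$, so if $0\in M$ then its differential at $0$, namely $e^{i\theta}\,\mathrm{Id}$, would preserve the tangent hyperplane $T_0M$; taking $\theta=\pi/2$, the real hyperplane $T_0M$ would be $J$-invariant, which is impossible since a $J$-invariant subspace is a complex subspace and hence of even real dimension, while $\dim_{\mathbb{R}}T_0M=2n+1$ (equivalently, $J$ orthogonal and preserving $T_0M$ would force $JN_0=\pm N_0$, contradicting $\langle JN_0,N_0\rangle=0$). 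The paper excludes $0\in M$ by a different mechanism: at a fixed point of the action all generators vanish, while $T$, being proportional to the Hamiltonian generator, never vanishes on $M$. Since ``$q\notin M$'' is an explicit hypothesis of Theorem \ref{main}, this step cannot be waved through. A smaller point, which you share with the paper: Theorem \ref{main} assumes $M$ connected, whereas the boundary of a bounded domain need not be; this is harmless because the argument applies to each component and forces every component onto the same sphere centered at the origin, but it deserves a line.
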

\noindent
Here we show that this result is a corollary of our main Theorem (\ref{main})
\begin{proof}
We can think of $\mathbb{C}^{n+1}$  as a K\"{a}hler manifold with the standard compatible symplectic, complex and metric structures and with sectional curvature identically zero. We recall now that for every hypersurface $M$ in $\mathbb{C}^{n+1}$, with $f$ as defining function, the characteristic direction $T$ of $M$ is exactly the (normalized) hamiltonian vector field for the hamiltonian function $f$. Moreover by the very definition of Reinhardt domain  (\ref{reinhardtdefinition}) we recognize that there exist an explicit toric group action $A$ on $\mathbb{C}^{n+1}$ such that $M$ is stable under $A$. Since $\mathbb{C}^{n+1}$ is non compact we note that we do not have a symplectic toric K\"{a}hler manifold, but in this particular situation we have that the origin is a fixed point for $A$ and it does not belong to $M$. Then Theorem (\ref{main}) holds.
\end{proof}


\addcontentsline{toc}{section}{Riferimenti Bibliografici}


\end{document}